\definecolor{DarkGray}{gray}{0.35}
\newlength{\hatchspread}
\newlength{\hatchthickness}
\newlength{\hatchshift}
\newcommand{\hatchcolor}{}
\tikzset{hatchspread/.code={\setlength{\hatchspread}{#1}},
         hatchthickness/.code={\setlength{\hatchthickness}{#1}},
         hatchshift/.code={\setlength{\hatchshift}{#1}},
         hatchcolor/.code={\renewcommand{\hatchcolor}{#1}}}
\tikzset{hatchspread=3pt,
         hatchthickness=0.4pt,
         hatchshift=0pt,
         hatchcolor=black}
\newcommand*{\centerfloat}{%
  \parindent \z@
  \leftskip \z@ \@plus 1fil \@minus \textwidth
  \rightskip\leftskip
  \parfillskip \z@skip}
\NewDocumentCommand{\makeabbrev}{mmm}
 {
  \yoruk_makeabbrev:nnn { #1 } { #2 } { #3 }
 }
\makeabbrev{\textbf}{tbf#1}{a,b,c,d,e,f,g,h,i,j,k,l,m,n,o,p,q,r,s,t,u,v,w,x,y,z,A,B,C,D,E,F,G,H,I,J,K,L,M,N,O,P,Q,R,S,T,U,V,W,X,Y,Z}
\makeabbrev{\textbf}{bf#1}{a,b,c,d,e,f,g,h,i,j,k,l,m,n,o,p,q,r,s,t,u,v,w,x,y,z,A,B,C,D,E,F,G,H,I,J,K,L,M,N,O,P,Q,R,S,T,U,V,W,X,Y,Z}
\makeabbrev{\textsf}{tsf#1}{a,b,c,d,e,f,g,h,i,j,k,l,m,n,o,p,q,r,s,t,u,v,w,x,y,z,A,B,C,D,E,F,G,H,I,J,K,L,M,N,O,P,Q,R,S,T,U,V,W,X,Y,Z}
\makeabbrev{\mathsf}{mss#1}{a,b,c,d,e,f,g,h,i,j,k,l,m,n,o,p,q,r,s,t,u,v,w,x,y,z,A,B,C,D,E,F,G,H,I,J,K,L,M,N,O,P,Q,R,S,T,U,V,W,X,Y,Z}
\makeabbrev{\mathfrak}{mf#1}{a,b,c,d,e,f,g,h,i,j,k,l,m,n,o,p,q,r,s,t,u,v,w,x,y,z,A,B,C,D,E,F,G,H,I,J,K,L,M,N,O,P,Q,R,S,T,U,V,W,X,Y,Z}
\makeabbrev{\mathrm}{mrm#1}{a,b,c,d,e,f,g,h,i,j,k,l,m,n,o,p,q,r,s,t,u,v,w,x,y,z,A,B,C,D,E,F,G,H,I,J,K,L,M,N,O,P,Q,R,S,T,U,V,W,X,Y,Z}
\makeabbrev{\mathbf}{mbf#1}{a,b,c,d,e,f,g,h,i,j,k,l,m,n,o,p,q,r,s,t,u,v,w,x,y,z,A,B,C,D,E,F,G,H,I,J,K,L,M,N,O,P,Q,R,S,T,U,V,W,X,Y,Z}
\makeabbrev{\mathcal}{mc#1}{A,B,C,D,E,F,G,H,I,J,K,L,M,N,O,P,Q,R,S,T,U,V,W,X,Y,Z}
\makeabbrev{\mathbb}{mbb#1}{A,B,C,D,E,F,G,H,I,J,K,L,M,N,O,P,Q,R,S,T,U,V,W,X,Y,Z}
\makeabbrev{\mathscr}{ms#1}{A,B,C,D,E,F,G,H,I,J,K,L,M,N,O,P,Q,R,S,T,U,V,W,X,Y,Z}
\makeabbrev{\mathrm}{#1}{
Id,id,ran,rk,diag,stab,ann,conv,pr,ev,tr,End,Hom,sgn,im,op,can,fin,ext,red,tot,
%
rot,usc,lsc,Lip,LocLip,lip,bSymLip,osc,AC,loc,uloc,spec,coz,z,ul,
%
supp,Opt,Adm,Cpl,Geo,GeoSel,GeoOpt,GeoAdm,GeoCpl,reg,
%
bd,co,Ric,Exp,dExp,dist,seg,Seg,cut,fcut,Cut,SDiff,Iso,Isom,diam,cl,Homeo,Diff,Der,vol,dvol,inj,relint, Graph, sub,codim,
%
var,law,Poi,Gam,pa,so,iso,fs,inv,pqi,mix,
TestF,
}
\makeabbrev{\mathsf}{#1}{DP,CD,BE,MCP,Ent,wMTW,MTW,RCD,ncRCD,QCD,EVI,Irr,IH,SC,wFe,VA,UP,Curv,Alex,CAT, Var}
\let\epsilon\varepsilon
\let\temp\phi
\let\phi\varphi
\let\varphi\temp
\newcommand{\diff}{\mathop{}\!\mathrm{d}}
\DeclareSymbolFont{symbolsC}{U}{pxsyc}{m}{n}
\DeclareMathSymbol{\medcirc}{\mathbin}{symbolsC}{7}
\DeclareSymbolFont{symbolsZ}{OMS}{pxsy}{m}{n}
\newcommand{\N}{{\mathbb N}}
\newcommand{\R}{{\mathbb R}}
\tikzset{cross/.style={cross out, draw=black, minimum size=2*(#1-\pgflinewidth), inner sep=0pt, outer sep=0pt},
cross/.default={4pt}}
\newcommand{\comma}{\,\,\mathrm{,}\;\,}
\newcommand{\fstop}{\,\,\mathrm{.}}
\newcommand{\cdc}{\Gamma}
\newcommand{\Cyl}[1]{\mcF\mcC^\infty_b(#1)}
\newcommand{\QP}{{\mu}}
\newcommand{\dUpsilon}{{\boldsymbol\Upsilon}}
\newcommand{\U}{\dUpsilon}
\newcommand{\sine}{\mathsf{sine}}
\newcommand{\E}{\mathcal E}
\newcommand{\F}{\mathcal F}
 \newcommand{\cyl}{\Cyl{\mathcal D}}
\theoremstyle{plain}	
\newtheorem{thm}{Theorem}[section]
\newtheorem*{thm*}{Theorem}
\newtheorem*{mthm*}{Main Theorem}
\newtheorem*{cor*}{Corollary}
\theoremstyle{definition}
\newtheorem*{defs*}{Definition}
\theoremstyle{remark}
\newtheorem{rem}[thm]{\bf Remark}
\newtheorem*{asm*}{{\bf List of Assumptions}}
\renewcommand{\paragraph}[1]{\medskip\emph{#1}.}
\newcommand{\X}{\R}
\newcommand{\cquad}{\comma\quad}
\begin{document}
\title[Spectral Gap of Dyson Brownian Motion]{The Infinite Dyson Brownian Motion with $\beta=2$ Does Not Have a Spectral Gap}

\author[K.~Suzuki]{Kohei Suzuki}
\thanks{\hspace{-5.5mm}  Department of Mathematical Science, Durham University, South Road, DH1 3LE, United Kingdom
\\
\hspace{2.0mm} E-mail: kohei.suzuki@durham.ac.uk
}

\keywords{\vspace{2mm}infinite Dyson Brownian motion, spectral gap}


\begin{abstract} 
We prove that the Dirichlet forms associated with the
unlabelled infinite Dyson Brownian motion with the
inverse temperature $\beta=2$ do not have a spectral gap.
\end{abstract}

\maketitle

\vspace{-8mm}
\section*{}
The interacting particle system~$\mathbb X_t=(X_t^i)_{i \in \N}$ discussed in this article is {\it formally} described as the following  stochastic differential equation of infinitely many particles in $\R$:
\begin{align}  \label{d:DBMS} \tag{DBM}
\diff X_t^i=     \frac{\beta}{2}\sum_{j: j \neq i} \frac{1}{X_t^i- X_t^j} \diff t + \diff B^i_t, \quad i \in \N \comma 
\end{align}
 where $(B_t^i: i \in \N)$ is the family of infinitely many independent Brownian motions in~$\R$ and $\beta>0$ is a positive constant called inverse temperature. The solution~$\mathbb X$ to \eqref{d:DBMS} is called {\it infinite Dyson Brownian motion with inverse temperature $\beta$}, named after Dyson~\cite{Dys62}, which has a particular importance in relation to random matrix theory.  It can be thought of as a diffusion process in~the space $\U$ of locally finite point measures (called {\it configuration space}) by dropping the labelling via the map $(x_i)_{i=1}^\infty \mapsto \sum_{i=1}^\infty \delta_{x_i}$, which is called {\it unlabelled solution} and denoted by $\mathsf X$.  
Over these thirty five years, the construction of weak/strong solutions and their uniqueness have been studied e.g., in \cite{Spo87, NagFor98, KatTan10, Osa96, Osa12, Osa13, Tsa16}. 

A question that is addressed in this article is a spectral gap of the unlabelled solution~$\mathsf X$ in the case $\beta=2$.
In this case,  a weaker property, what is called {\it irreducibility} (called also {\it ergodicity}, or {\it convergence to equilibrium}),  has been recently settled affirmatively in \cite{OsaTsu21, OsaOsa23, Suz23}. In particular,  the law~of the time marginal~$\mathsf X_t$ converges to an equilibrium measure as $t \to \infty$, which is the law of the $\sine_2$ point process.
Any aspect of quantitative rate of this convergence, however, remains uncharted so far. 
The objective of this article is to provide a negative result for spectral gaps of the unlabelled solution~$\mathsf X$ with $\beta=2$. 
\vspace{-2mm}
\subsection*{List of Notation}
\begin{itemize}
\item $\mathcal D=\mathcal C_c^\infty(\X)$ for the space of real-valued compactly supported smooth functions in $\X$;
\item $\mathcal C_b^\infty(\R^k)$ for the space of real-valued smooth functions in $\R^k$ $(k \in \N)$ whose arbitrary order of derivative is continuous and bounded;
\item $\cdc^{\X}(u, v)$ for the square field of functions $u, v:\X \to \R$ defined as
$\cdc^\R(u, v):= (\frac{\diff}{\diff x}u)(\frac{\diff}{\diff x}v)$;
\end{itemize}
\begin{itemize}
\item $\U$ for the {\it configuration space over $\X$}, i.e., the space of Radon point measures  on $\X$;
\item $\QP$ for the law of the {\it $\sine_2$ point process}. Namely, $\QP$ is the law of the determinantal point process whose $n$-point correlation function is 
$$\rho^{(n)}(x_1, \ldots, x_n) = \mathsf{det}\Bigl(K(x_i, x_j)_{i, j =1}^n\Bigr) \cquad K(x_i, x_j)=\frac{\sin \pi(x_i-x_j)}{\pi(x_i-x_j)} \ ;$$
\item $L^p(\U, \QP)$ $(1 \le p <\infty)$ for the $\QP$-equivalence classes of functions $u:\U \to \R$ with $\|u\|^p_{L^p(\QP)}:=\int_{\U}|u|^p \diff \QP<\infty$;
\item $\Var(u)$ for the {\it variance} of $u \in L^2(\U, \QP)$ defined as
$\Var(u):=\int_{\U} u^2 \diff \QP - \Bigl(\int_{\U} u \diff \QP \Bigr)^2$;
\item $u^*:\U \to \R$ for {\it linear statistics of $u: \X \to \R$} defined as 
$$u^*(\gamma):=\int_{\X} u(x) \diff \gamma(x) \cquad \gamma \in \U \ ;$$
\item $\Cyl{\mathcal D}$ for the space of {\it cylinder functions} $U: \U \to \R$:
\begin{align*}
U=\Phi(u_1^*, \ldots, u_k^*) \comma \  \{u_1,\ldots, u_k\} \subset \mathcal D \comma \  \Phi \in \mathcal C_b^\infty(\R^k) \comma \  k \in \N \ ;
\end{align*}
\item $\cdc^{\U}$ for the {\it square field operator in $\U$}: 
\begin{align*}
\cdc^{\U}(U):=\sum_{i, j=1}^k \partial_{i}\Phi(u_1^*, \ldots, u_k^*) \partial_{j}\Phi(u_1^*, \ldots, u_k^*) \cdc^\R(u_i, u_j)^* \cquad U \in \Cyl{\mathcal D} \ ;
\end{align*}
\item $\E$ for the functional $\E:  \Cyl{\mathcal D}\times \Cyl{\mathcal D}  \to \R$ defined as 
\begin{align} \label{e:EF}
\E(u,v):=\frac{1}{2}\int_{\U}\cdc^{\U}(u, v) \diff \QP \cquad u, v \in \Cyl{\mathcal D} \comma
\end{align}
where $\cdc^{\U}(u, v) :=\frac{1}{4}\bigl(\cdc^\U(u+v)-\cdc^\U(u-v)\bigr)$. 
We write~$\E(u, u)=\E(u)$.
\end{itemize}
Let $\mathcal Q: L^2(\U, \QP) \times L^2(\U, \QP) \to \R \cup\{\infty\}$  be a symmetric bilinear function with a dense domain~$\mathcal F:=\{u\in L^2(\U, \QP): \mathcal Q(u, u)<\infty\}$ and $\mathcal Q(u, u) \ge 0$ for every $u \in \mathcal F$. It is called {\it closed} if the space~$\mathcal F$ endowed with the norm~$\|\cdot\|_\mathcal F$ defined as $\|\cdot\|^2_\mathcal F:=\mathcal Q(\cdot) + \|\cdot\|_{L^2(\QP)}^2$ is a real~Hilbert space. A pair~$(\mathcal Q, \mathcal F)$ is a {\it closed extension of $(\E, \Cyl{\mathcal D})$} if $(\mathcal Q, \mathcal F)$ is closed and 
$$ \cyl \subset \mathcal F \cquad \mathcal Q= \E \quad \text{on} \quad \cyl\times \cyl \fstop$$ 
In the rest of the paper, we keep the same symbol~$\E$ for extensions and simply say that $(\E, \mathcal F)$ is a closed extension of $(\E, \cyl)$. 
The unlabelled solution $\mathsf X$ of \eqref{d:DBMS} with $\beta=2$ starting at a particular class of admissible initial conditions has been identified with the diffusion process properly associated with a closed extension of~$(\E, \Cyl{\mathcal D})$, see~\cite[Thm.~24]{Osa12}. 
\smallskip

\begin{thm*}\label{t:maind} Any closed extension~$(\E, \mathcal F)$ of $(\E, \Cyl{\mathcal D})$ does not have a spectral gap, i.e., 
$$\inf_{\substack{u \in \mathcal F \\ u \neq 0}} \frac{\E(u)}{\Var(u)} = 0 \fstop$$
\end{thm*}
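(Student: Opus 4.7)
The idea is to test the Rayleigh quotient on linear statistics $f^*$ for a sequence of slowly varying functions $f \in \mathcal D$. Heuristically, $\E(f^*)$ is proportional to $\|f'\|_{L^2(\R)}^2$ (an $H^1$ seminorm), whereas $\Var(f^*)$ under $\QP$ behaves like an $H^{1/2}$-type seminorm (reflecting the classical logarithmic number variance and the rigidity of the $\sine_2$ process). Rescaling $f$ to large length scales must therefore drive the Rayleigh quotient to zero.

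\textbf{Step 1: $f^*$ lies in every closed extension.} Fix $f \in \mathcal D$. Choose $\Phi_R \in \mathcal C_b^\infty(\R)$ with $\Phi_R(x)=x$ on $[-R,R]$, $\|\Phi_R\|_\infty \le R+1$, $\|\Phi_R'\|_\infty \le 1$, and $\Phi_R' \to 1$ pointwise. Then $U_R := \Phi_R(f^*) \in \cyl$, and by dominated convergence
\begin{align*}
\E(U_R - U_S) = \tfrac{1}{2} \int_\U \bigl(\Phi_R'(f^*) - \Phi_S'(f^*)\bigr)^2 \cdc^\R(f,f)^* \diff \QP \to 0 \quad (R,S \to \infty),
\end{align*}
using the bound $\int_\U \cdc^\R(f,f)^* \diff \QP = \int_\R (f'(x))^2 \diff x < \infty$ from $K(x,x) \equiv 1$. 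Also $U_R \to f^*$ in $L^2(\QP)$ because $\Var(f^*) < \infty$ for the $\sine_2$ process. Closedness of $(\E, \mathcal F)$ then forces $f^* \in \mathcal F$ with $\E(f^*) = \tfrac{1}{2} \int_\R (f'(x))^2 \diff x$.

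\textbf{Step 2: scaling the variance.} The determinantal and projection structure of $\QP$ gives the classical identity
\begin{align*}
\Var(f^*) = \tfrac{1}{2} \iint_{\R^2} \bigl(f(x)-f(y)\bigr)^2 \frac{\sin^2(\pi(x-y))}{\pi^2(x-y)^2} \diff x \diff y.
\end{align*}
Since the Fourier transform of $(\sin(\pi s)/(\pi s))^2$ is the triangle $\max(1-|\xi|, 0)$, Plancherel rewrites this as $\int_\R \min(|\xi|, 1) |\hat f(\xi)|^2 \diff \xi$. Pick any non-constant $\phi \in \mathcal D$ and set $f_n(x) := \phi(x/n)$; then $\hat f_n(\xi) = n\hat\phi(n\xi)$, so
\begin{align*}
\E(f_n^*) = \tfrac{1}{2n}\|\phi'\|_{L^2}^2 \to 0, \quad \Var(f_n^*) = n \int_\R \min\!\bigl(\tfrac{|\eta|}{n}, 1\bigr) |\hat\phi(\eta)|^2 \diff \eta \to \int_\R |\eta|\,|\hat\phi(\eta)|^2 \diff \eta \in (0, \infty),
\end{align*}
by monotone convergence. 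Hence $\E(f_n^*)/\Var(f_n^*) = O(1/n) \to 0$, proving the theorem.

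\textbf{Where it is hard.} The only non-routine point is Step 1: the natural test functions $f^*$ are unbounded and hence not in $\cyl$, yet they must belong to \emph{every} closed extension $\mathcal F$. This is handled by closedness alone, through a cylinder approximation that is simultaneously $L^2$- and $\E$-Cauchy; no Markovianity, semigroup, or reference process is invoked, which is important because the spectral-gap question is posed for \emph{all} closed extensions of $(\E, \cyl)$.
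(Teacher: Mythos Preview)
Your proof is correct and shares the paper's core idea—drive the Rayleigh quotient to zero by testing on linear statistics of functions that spread over larger and larger scales—but the execution differs in two notable ways.

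First, for the variance you invoke the classical identity $\Var(f^*)=\tfrac12\iint (f(x)-f(y))^2 K(x,y)^2\,\diff x\,\diff y$ and then pass to Fourier space, reducing everything to the multiplier $\min(|\xi|,1)$; the scaling behaviour of energy versus variance is then read off immediately from homogeneity. The paper instead works with the explicit family $u_\sigma(x)=x e^{-x^2/(2\sigma^2)}$, exploits the mean-zero property to kill the constant part of $\rho^{(2)}$, and evaluates the remaining double integral by a $45^\circ$ rotation and closed-form Gaussian integrals. Your Fourier route is shorter and makes the $H^{1/2}$ versus $H^1$ heuristic transparent; the paper's route is more hands-on but requires no Plancherel machinery and isolates exactly which Gaussian integrals control the lower bound on $\Var$.

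Second, your test functions $f_n=\phi(\cdot/n)$ lie in $\mcD=\mcC_c^\infty(\R)$, so the only obstruction to $f_n^*\in\mcF$ is that the outer function is the (unbounded) identity; you handle this directly by a cut-off approximation $\Phi_R(f_n^*)$ that is simultaneously $L^2$- and $\E$-Cauchy, using nothing beyond closedness of $(\E,\mcF)$. The paper's $u_\sigma$ is not compactly supported, so it appeals to \cite[Prop.~4.6]{MaRoe00} to place $u_\sigma^*$ in the domain and to identify $\cdc^\U(u_\sigma^*)$. Your argument is therefore self-contained on this point, which matters because the statement concerns \emph{every} closed extension.
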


\section{Proof of the Theorem~\ref{t:maind}}

In the rest of the arguments, we often use the following facts about the $\sine_2$ point process. 
\begin{itemize}
\item {\bf (intensity measure)} If $u \in L^1(\R)$, then $u^* \in L^1(\U, \QP)$ and 
\begin{align}\label{d:I}
\int_\U u^* \diff \QP = \int_\X u \diff x  \fstop 
\end{align}
\item {\bf (two-point correlation)} for every Borel measurable $u: \X \to \R$, 
\begin{align}\label{d:T}
\int_\U \sum_{\substack{x, y \in \gamma \\ x \neq y}} u(x)u(y) \diff \QP(\gamma) = \int_{\X^2} u(x)u(y) \rho^{(2)}(x, y)\diff x \diff y \quad (\le \infty)\comma
\end{align}
 where $\rho^{(2)}$ is the two-point correlation function given by
$$\rho^{(2)}(x, y) = 1- \frac{\sin^2\bigl(\pi(x-y)\bigr)}{\pi^2(x-y)^2} \fstop$$
\end{itemize}
 \begin{proof}[Proof of the Theorem~\ref{t:maind}]
Take $u_\sigma(x):=  xe^{-\frac{x^2}{2\sigma^2}}$ and define the associated linear statistics~$U_\sigma: \U \to \R$  as $U_\sigma(\gamma):=u_\sigma^*(\gamma)=\int_{\X} u_\sigma(x) \diff \gamma$. In the following argument, we fix $\sigma$ and simply write $u$ and $U$.

We first compute the variance of $U$. By the intensity formula~\eqref{d:I} and the mean zero property~$\int_\R u \diff x=0$,
\begin{align*}
\Var(U)=\int_{\U}U^2 \diff \QP - \biggl(\int_{\U}U \diff \QP\biggr)^2=\int_{\U}U^2 \diff \QP - \biggl(\int_{\X}u \diff x\biggr)^2 = \int_{\U}U^2 \diff \QP\fstop
\end{align*}
The right-hand side can be further deduced to 
\begin{align} \label{e:VA}
\int_{\U}\biggl(\sum_{x \in \gamma}u(x)\biggr)^2 \diff \QP(\gamma)  
&= \int_{\U}\biggl(\sum_{x \in \gamma}u(x)^2 +\sum_{\substack{x, y \in \gamma\\ x \neq y}}u(x)u(y)\biggr)  \diff \QP(\gamma)
\\
& = \int_{\X}u(x)^2\diff x + \int_{\X^{2}} u(x)u(y) \rho^{(2)}(x,y) \diff x \diff y \notag
\\
&= \frac{\sqrt{\pi}\sigma^3}{2} +  \int_{\X^{2}} u(x)u(y) \rho^{(2)}(x,y) \diff x \diff y \fstop \notag
\end{align}
We now give the evaluation on the second term $\int_{\X^{2}} u(x)u(y) \rho^{(2)}(x,y) \diff x \diff y$. Recalling $\rho^{(2)}(x,y)= 1-\frac{\sin^2(\pi(x-y))}{\pi^2(x-y)^2}$ and $\int_{\X^{2}}u(x)u(y) \diff x \diff y=0$, we focus only on the evaluation of
\begin{align}\label{e:coI}
-\int_{\X^{2}}u(x)u(y)\frac{\sin^2(\pi(x-y))}{\pi^2(x-y)^2} \diff x \diff y \fstop
\end{align}
By the change of variables (the rotation by $45$ degree)
\begin{align*}
x=\frac{1}{\sqrt{2}}(u+v) \cquad y=\frac{1}{\sqrt{2}}(v-u) \comma
\end{align*}
the integral~\eqref{e:coI} comes down to 
\begin{align*}
& -\frac{1}{4}\int_{\X^2} (v^2-u^2)e^{-\frac{u^2+v^2}{2\sigma^2}} \frac{\sin^2(\sqrt{2}\pi u)}{\pi^2 u^2} \diff u \diff v
\\
&=\frac{1}{4}\int_{\X^2} e^{-\frac{u^2+v^2}{2\sigma^2}} \frac{\sin^2(\sqrt{2}\pi u)}{\pi^2} \diff u \diff v  - \frac{1}{4}\int_{\X^2}  v^2e^{-\frac{u^2+v^2}{2\sigma^2}} \frac{\sin^2(\sqrt{2}\pi u)}{\pi^2 u^2} \diff u \diff v
\\
&=: {\rm (I)} + {\rm (II)} \fstop
\end{align*}
By using the following formula (a proof will be given later): 
\begin{align} \label{c:s}
\int_{\R}e^{-\frac{u^2}{2\sigma^2}} \frac{\sin^2(\sqrt{2}\pi u)}{\pi^2} \diff u = \frac{\sigma e^{-4\pi^2 \sigma^2}(e^{4\pi^2 \sigma^2}-1)}{\sqrt{2}\pi^{3/2}}  \comma
\end{align}
the first term can be further computed as
\begin{align*}
{\rm (I)}&=\frac{1}{4}\int_{\R}e^{-\frac{v^2}{2\sigma^2}} \diff v \int_{\R}e^{-\frac{u^2}{2\sigma^2}} \frac{\sin^2(\sqrt{2}\pi u)}{\pi^2} \diff u 
=\frac{\sqrt{2\pi}\sigma}{4}  \times \frac{\sigma e^{-4\pi^2 \sigma^2}(e^{4\pi^2 \sigma^2}-1)}{\sqrt{2}\pi^{3/2}}  
\\
&= \frac{\sigma^{2}}{4\pi}(1-e^{-4\pi^2 \sigma^2}) \fstop
\end{align*}

For the second term, we first note that $\int_{\R} \frac{\sin^2(\sqrt{2}\pi u)}{\pi^2 u^2} \diff u = \sqrt{2}$, which can be immediately seen by the formula $\int_{\R} \frac{\sin^2(u)}{u^2}\diff u = \pi$ and the change of variable. Thus, we have 
\begin{align*}
{\rm (II)}&=- \frac{1}{4}\int_{\R}e^{-\frac{u^2}{2\sigma^2}}\frac{\sin^2(\sqrt{2}\pi u)}{\pi^2 u^2} \diff u \int_{\R} v^2 e^{-\frac{v^2}{2\sigma^2}} \diff v 
=- \int_{\R}e^{-\frac{u^2}{2\sigma^2}}\frac{\sin^2(\sqrt{2}\pi u)}{\pi^2 u^2} \diff u \times \frac{\sqrt{2\pi}\sigma^3}{4}
 \\
 &\ge -\int_{\R}\frac{\sin^2(\sqrt{2}\pi u)}{\pi^2 u^2} \diff u \times \frac{\sqrt{2\pi}\sigma^3}{4}
 = -\frac{\sqrt{\pi}\sigma^3}{2}
 \fstop
\end{align*}
By plugging these estimates into \eqref{e:VA},
\begin{align}
\Var(U) \ge \frac{\sqrt{\pi}\sigma^3}{2}  - \frac{\sqrt{\pi}\sigma^3}{2} +  \frac{\sigma^{2}}{4\pi}(1-e^{-4\pi^2 \sigma^2})  =  \frac{\sigma^{2}}{4\pi}(1-e^{-4\pi^2 \sigma^2}) \fstop
\end{align}

We next compute $\E(U)=(1/2)\int_\U \cdc^\U(U) \diff \QP$. By e.g., \cite[Prop.~4.6]{MaRoe00}, $U=U_\sigma \in \mathcal F$ for every $\sigma>0$ and $\cdc^{\U}(U)=\cdc^{\R}(u)^*$ $\QP$-a.e.. Thus, for $\QP$-a.e.~$\gamma$, 
\begin{align*}
\cdc^\U(U)(\gamma) = \sum_{x \in \gamma}\cdc^\X(u)(x) \le 2\sum_{x \in \gamma}e^{-\frac{x^2}{\sigma^2}}(x)+ \frac{2}{\sigma^4}\sum_{x \in \gamma}x^4e^{-\frac{x^2}{\sigma^2}}(x)\fstop
\end{align*}
By the intensity formula~\eqref{d:I}, we have 
\begin{align}
\E(U)=\frac{1}{2}\int_{\U}\cdc^\U(U)\diff \QP & \le  \int_{\U}\bigg(\sum_{x \in \gamma}e^{-\frac{x^2}{\sigma^2}}(x)+ \frac{1}{\sigma^4}\sum_{x \in \gamma}x^4e^{-\frac{x^2}{\sigma^2}}(x) \bigg)\diff \QP(\gamma) \notag
\\
&=  \int_{\X}\Bigl( e^{-\frac{x^2}{\sigma^2}} +  \frac{1}{\sigma^4} x^4e^{-\frac{x^2}{\sigma^2}}\Bigr)  \diff x \notag
\\
&=\sqrt{\pi} \sigma + \frac{3\sqrt{\pi}\sigma}{4}\fstop \notag
\end{align}
Therefore, we conclude 
\begin{align*}
\inf_{\substack{F \in \mathcal F\\F\neq 0}}\frac{\E(F)}{\Var(F)} 
\le \inf_{\sigma >0}\frac{\E(U_\sigma)}{\Var(U_\sigma)} \le  \frac{\sqrt{\pi} \sigma + \frac{3\sqrt{\pi}\sigma}{4}}{ \frac{\sigma^{2}}{4\pi}(1-e^{-4\pi^2 \sigma^2}) }\xrightarrow{\sigma \to \infty} 0 \fstop
\end{align*}
\end{proof}

\begin{proof}[Proof of \eqref{c:s}]
By the formula~$\sin^2 u=(1-\cos (2u))/2$, 
\begin{align*}
\int_{\R} e^{-\frac{u^2}{2\sigma^2}} \frac{\sin^2(\sqrt{2}\pi u)}{\pi^2} \diff u
&=\frac{1}{2\pi^2}\int_{\R} e^{-\frac{u^2}{2\sigma^2}} \bigl(1-\cos(2\sqrt{2}\pi u) \bigr)\diff u
\\
&=\frac{1}{2\pi^2}\sqrt{2\pi}\sigma - \frac{1}{2\pi^2}\int_{\R}e^{-\frac{u^2}{2\sigma^2}} \cos(2\sqrt{2}\pi u) \diff u \fstop
\end{align*}
Let $h(a):=\int_{0}^\infty e^{-b{u^2}} \cos(a u) \diff u$ for $a, b>0$. As the integrant $e^{-b{u^2}} \cos(a u)$ is symmetric in variable $u$, we have $2h(a)=\int_{\R} e^{-b{u^2}} \cos(a u) \diff u$. It is straightforward to check 
$$\partial_a h = -\frac{a}{2b} h \cquad h(0)=\int_0^\infty e^{-bu^2}\diff u = \frac{\sqrt{\pi}}{2\sqrt{b}} \fstop$$
 Solving this first-oder ordinary differential equation, we get $2h(a)=\frac{\sqrt{\pi}}{\sqrt{b}}e^{-\frac{a^2}{4b}}$.  Plugging $a=2\sqrt{2}\pi$  and  $b=1/(2\sigma^2)$, the sought formula is obtained.  
\end{proof}

\section{Concluding remark}

\begin{rem}[growth of variance \& spectral gap]
Let $\QP$ be the law of a  point process in $\R$ with constant intensity, $\Var_\QP$ be the variance with respect to $\QP$, and $\E_\QP$ be the functional defined in~\eqref{e:EF} with $\QP$.   
The proof of the Theorem shows that as long as 
 \begin{align}\label{e:k}\frac{\sigma}{\Var_\QP(u_\sigma^*)} \xrightarrow{\sigma \to \infty}0 \comma
 \end{align}
 any closed extension of $(\E_\QP, \Cyl{\mathcal D})$ (if it exists) does not have a spectral gap. 
 In the case where $\QP$ is the law $\pi$ of the Poisson point process in $\R$ with intensity~$1$, \eqref{e:k} holds true as we have 
$$\Var_{\pi}(u_\sigma^*)=\frac{\sqrt{\pi}\sigma^3}{2} \fstop$$
To find an example $\QP$ with which $(\E_\QP, \Cyl{\mathcal D})$ has a spectral gap, we need to look into point processes having a considerably slower growth of the variance than that of the Poisson point process so that \eqref{e:k} does not hold. This might be related to a property called {\it hyperuniformity}. 
\end{rem}

\section*{Acknowledgement}
The author appreciates Ostap Hryniv for his suggestion about the computation of Equation (5).

\bibliographystyle{alpha}
\bibliography{GeneralMasterBib.bib}

\end{document}